\documentclass[11pt]{article}

% Packages
% Grayscale testing
%\usepackage[gray]{xcolor}
\usepackage{amsmath, amssymb, amsthm}
\usepackage{graphicx}
\usepackage{hyperref}
\usepackage{geometry}
\geometry{margin=1in}
\usepackage{hyperref}
\usepackage{epigraph}
\usepackage{amssymb}
\usepackage{soul}
\usepackage{amssymb}
\usepackage{tikz}
\usepackage[font=small,skip=5pt]{caption}
\usepackage{xcolor}
\usepackage{lineno}

% \linenumbers
% \usepackage[backend=biber, style=numeric, doi=true, url=true, isbn=false, eprint=false]{biblatex}
% \addbibresource{references.bib}

% Theorem Environments
\newtheorem{theorem}{Theorem}
\newtheorem{lemma}[theorem]{Lemma}
\newtheorem{corollary}[theorem]{Corollary}

\newtheorem{conjecture}[theorem]{Conjecture}

% Custom Commands

% \newcommand{\psdforce}[2]{#1 \xrightarrow{\text{psd} } #2} % \psdforce{u}{v}
% \newcommand{\psdforce}[2]{#1 \rightarrow #2}
\newcommand{\force}[2]{(#1 \rightarrow #2)}
\newcommand{\zforce}[2]{(#1 \rightarrow #2)_z}
\newcommand{\psdforce}[2]{(#1 \rightarrow #2)_{+}}

% % Second option
% \newcommand{\psdforce}[2]{\text{psd}:#1 \rightarrow #2} % \zforce{u}{v}
% \newcommand{\zforce}[2]{\text{z}:#1 \rightarrow #2} % \zforce{u}{v}

\newcommand{\F}{\mathcal{F}}

\DeclareMathOperator{\Term}{Term}

\makeatletter
\newcommand{\smallCases}[1]{{\arraycolsep 3.2pt\Big\{\mkern-3mu\let\@nomath\@gobble\footnotesize\begin{array}{ll}#1\end{array}\mkern-4mu}}
\tikzset{super thick/.style={line width=2pt}}

% Title and Author
\title{A Characterization of Claw-Free Graphs \\ using Zero Forcing Invariants}

\author{
    Randy Davila\thanks{Corresponding author: \texttt{rrd6@rice.edu}} \\
    \small Department of Computational Applied \\ 
    \small Mathematics \& Operations Research \\ 
    \small Rice University \\ 
    \small \texttt{rrd6@rice.edu} 
    \and
    Houston Schuerger \\
    \small Department of Mathematics \\ 
    \small University of Texas Permian Basin \\ 
    \small \texttt{schuerger\_h@utpb.edu}
    \and
    Ben Small \\
    \small University Place, WA \\
    \small \texttt{bentsm@gmail.com}
}

\date{}
\begin{document}

\maketitle

% Abstract
\begin{abstract}
We prove that the \emph{standard zero forcing number} $Z(G)$ and the \emph{positive semidefinite zero forcing number} $Z_+(G)$ are equal for all claw-free graphs $G$. This result resolves a conjecture proposed by the computer program \emph{TxGraffiti} and highlights a connection between these graph invariants in claw-free structures. As a corollary, we show that a graph $G$ is claw-free if and only if every induced subgraph $H \subseteq G$ satisfies $Z(H) = Z_+(H)$. 
\end{abstract}

\epigraph{``Try to use the claw, but you don't need to use the claw''}{-- Baloo the Bear, \emph{The Jungle Book}}

{\small \textbf{Keywords:} \emph{claw-free graphs}; \emph{positive semidefinite zero forcing number}; \emph{TxGraffiti}; \emph{standard zero forcing number}} \\
\indent {\small \textbf{AMS subject classification: 68R10}}

\section{\textbf{INTRODUCTION}}\label{sec:introduction}
 % \ben{(While there may be forcing games in which vertices are changed back to white, I think that trying to discuss them here simply muddies the waters.)}  
Let $G$ be an undirected, simple, and finite graph with vertex set $V(G)$, and let $V(G) = B \cup W$ represent a partition of the vertices into a set of blue vertices $B$ and a set of white vertices $W$, with time $t = 0$ initialized. A \emph{forcing game} on a graph is a process in which a player strategically applies a \emph{color change rule} to propagate blue, or white, vertices through the graph. A color change rule specifies a condition under which a vertex $v \in V(G)$ can be \emph{forced} to change color (either from white to blue, or from blue to white).  In this paper we only consider those rules allowing blue vertices to force color changes on white vertices, denoted $\force{u}{v}$, where $u$ is a blue vertex and $v$ is a white vertex changing to be blue. A \emph{valid force} under a given rule is any pair $\force{u}{v}$ where $u \in B$ and $v \in W$ satisfies the condition specified by the rule. At each time step $t$, a player selects a subset of valid forces, ensuring that no white vertex is forced by more than one vertex. If $\Delta B_t$ denotes the set of newly blue vertices resulting from these forces, the sets are then updated as $B \gets B \cup \Delta B_t$ and $W \gets W \setminus \Delta B_t$, and the time is incremented as $t \gets t + 1$.  This process is repeated until no further changes occur, resulting in a \emph{terminal time step}, denoted by $\tau$. If all vertices in $V(G)$ are blue at this terminal time step, the initial set $B$ is called a \emph{forcing set} of $G$ under the given rule. The minimum cardinality of such a forcing set defines a graph invariant associated with the rule.

Two of the most studied color change rules are the \emph{standard zero forcing rule} and the \emph{positive semidefinite zero forcing rule} (or simply \emph{standard forcing rule} and \emph{positive semidefinite forcing rule} for brevity). The \emph{standard forcing rule} allows a vertex $u \in B$ with exactly one neighbor $v \in W$ to \emph{standard force} $v$, denoted $\zforce{u}{v}$. Forcing sets under this rule are called \emph{standard forcing sets}, and the minimum cardinality of such a set is the \emph{standard forcing number}, $Z(G)$. The \emph{positive semidefinite forcing rule} generalizes the standard rule by allowing a blue vertex $u \in B$ to force a white vertex $v \in W$ if $v$ is the only neighbor of $u$ in its component of the induced subgraph $G[W]$, denoted $\psdforce{u}{v}$. Forcing sets under this rule are called \emph{positive semidefinite forcing sets} and the minimum cardinality of such a set is the \emph{positive semidefinite forcing number}, $Z_+(G)$.

Both $Z(G)$ and $Z_+(G)$ were initially motivated by applications in linear algebra, particularly in the problem of determining the \emph{maximum nullity} (equivalently, the \emph{minimum rank}) over symmetric matrices described by a graph. Specifically, $Z(G)$ provides the upper bound $M(G) \leq Z(G)$ for the maximum nullity $M(G)$ of symmetric matrices whose off-diagonal zero-nonzero pattern corresponds to the adjacency structure of $G$ (see~\cite{AIM2008}).  Similarly, $Z_+(G)$ bounds the \emph{maximum positive semidefinite nullity}, $M^+(G)$, which is the largest nullity among symmetric positive semidefinite matrices whose off-diagonal zero-nonzero pattern matches $G$ (see~\cite{Barioli2013}). 

It is straightforward to observe that $Z_+(G) \leq Z(G)$ for any graph $G$, as the positive semidefinite forcing rule has more flexible criteria for valid forces than the standard forcing rule. Furthermore, the gap between these parameters can be arbitrarily large: for any constant $C > 0$, there exists a graph $G$ such that $Z(G) - Z_+(G) > C$. For example, in any tree $T$, the positive semidefinite forcing number satisfies $Z_+(T) = 1$ regardless of the structure of $T$, while the standard forcing number $Z(T)$ can grow arbitrarily large with the maximum degree or the number of strong support vertices in $T$. Conversely, certain graphs, such as paths and diamond-necklaces, satisfy $Z_+(G) = Z(G)$ (see~\cite{Hogben2022Inverse}). Indeed, the conjecturing program \emph{TxGraffiti} (see~\cite{Davila2024} for a description of the program) conjectured the following:
\begin{conjecture}[\emph{TxGraffiti} -- confirmed]\label{conj:main} 
If $G$ is a claw-free graph, then $Z_+(G) = Z(G)$. 
\end{conjecture}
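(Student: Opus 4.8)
Since the positive semidefinite rule is a relaxation of the standard rule, the inequality $Z_+(G) \le Z(G)$ holds for every graph, so the entire content of the conjecture is the reverse inequality $Z(G) \le Z_+(G)$ for claw-free $G$. The plan is to start from an optimal positive semidefinite forcing set $B$ with $|B| = Z_+(G)$, fix a chronological list of forces realizing it, and manufacture from it a \emph{standard} forcing set of the same cardinality. The $4$-cycle already shows that an optimal positive semidefinite set (e.g.\ two antipodal vertices) need not itself be a standard forcing set, so the construction must be allowed to move vertices of $B$ around; the goal is only to preserve the count.

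The engine is a structural dichotomy forced by the absence of an induced $K_{1,3}$. At any moment of a forcing game let $W$ be the white set, and consider the components of $G[W]$. If a blue vertex $u$ had white neighbors in three distinct components, choosing one neighbor per component would produce an independent triple in $N(u)$ and hence an induced claw; therefore \emph{every blue vertex has white neighbors in at most two components of} $G[W]$. I will use this to pin down exactly the forces that separate $Z_+$ from $Z$: a positive semidefinite force $u \to v$ fails to be a standard force precisely when $u$ has a second white neighbor, which by the above must lie in a single component $C'$ distinct from the one containing $v$. Writing $S$ for the set of white neighbors of $u$ inside $C'$, any two vertices of $S$ together with $v$ and $u$ would form a claw unless those two vertices are adjacent; hence \emph{$S$ is a clique}, every vertex of which is nonadjacent to $v$. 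This clique-versus-isolated picture is the only obstruction to a force being standard, and it is the lever for the whole argument.

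With this in hand I would induct on the number of forces in the chronological list that are not valid standard forces. Consider the first such force $u \to v$, so that up to this point the positive semidefinite and standard processes have coincided and produced the same blue set; the residual white neighbors of $u$ form the clique $S \subseteq C'$ described above. The aim is to rewrite the game so that this force becomes standard (or is performed by another vertex) without increasing $|B|$: concretely, promote $v$ into the starting set, reassign the forces issuing from $u$ so that $u$ now enters the component $C'$ through the clique $S$, and show that a vertex previously needed in $B$ is thereby freed and can be deleted. The clique structure of $S$ is what makes the reassignment go through, since a clique in $G[W]$ can be consumed by a single forcing chain entering it. Repeating this surgery drives the count of non-standard forces to zero, at which point the remaining starting set is a standard forcing set of size $Z_+(G)$, giving $Z(G) \le Z_+(G)$ and hence equality.

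The hard part will be the bookkeeping of the reassignment step: proving that after promoting $v$ and redirecting $u$ into the clique $S$ the modified set is still a positive semidefinite forcing set, that exactly one vertex can then be removed to restore the cardinality, and that a suitable measure (the number of non-standard forces, or a refinement of it) strictly decreases so the induction terminates. The delicacy is that later forces depend on the order in which components of $G[W]$ merge and split, and redirecting one force can invalidate downstream forces; controlling these interactions — most plausibly by always operating on the \emph{earliest} non-standard force and exploiting that, up to that time, the two processes agree — is where the claw-free hypothesis must be invoked repeatedly, through the at-most-two-components lemma and the clique structure of $S$.
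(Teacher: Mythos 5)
Your two structural observations are correct and genuinely claw-free in spirit: a blue vertex cannot have white neighbors in three or more components of $G[W]$, and when a positive semidefinite force $u \rightarrow v$ fails to be standard, the remaining white neighbors of $u$ lie in one other component and form a clique. But the proof stops exactly where the difficulty begins. The entire argument rests on the surgery step --- ``promote $v$ into the starting set, reassign the forces issuing from $u$, and show that a vertex previously needed in $B$ is thereby freed and can be deleted'' --- and no argument is given for any of its three claims: that the modified set still forces, that some specific vertex becomes deletable (which vertex? why?), or that the number of non-standard forces cannot increase downstream. These are not bookkeeping details; they are the theorem. Exchange arguments of this kind for positive semidefinite forcing are delicate precisely because redirecting one force changes how components of $G[W]$ merge at every later step, and you acknowledge this without resolving it. Note also that if the clique $S$ has two or more vertices, $u$ cannot even perform a force into $C'$ until all but one vertex of $S$ is already blue, so ``redirecting $u$ into the clique $S$'' is not available at the time of the surgery. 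As written, the proposal is a plan, not a proof.

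It is instructive to contrast this with how the paper sidesteps the per-force surgery entirely. Rather than repairing non-standard forces one at a time, the paper first proves a lemma that holds for \emph{all} graphs (no claw-freeness needed): every connected graph has a \emph{minimum} positive semidefinite forcing set $S$ whose complement $G-S$ is connected. Proving this exchange rigorously is exactly where the technical machinery lives --- it uses path bundles and the fact that the terminus of a restricted relaxed chronology is again a forcing set, i.e., a tool that certifies which vertex can be swapped out without losing the forcing property (the certificate your proposal is missing). Claw-freeness is then invoked only once, and for a different purpose: starting from such an $S$, if forcing a vertex $v$ ever disconnected the white set, $v$ would be a cut vertex of $G[W]$ whose forcer $u$ has no other white neighbor, and $u$, $v$, plus two white neighbors of $v$ in different components would induce a claw centered at $v$. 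Hence the white set stays connected at every step, and on a connected white set the positive semidefinite rule and the standard rule coincide, so $S$ is already a standard forcing set and no non-standard force ever occurs. If you want to salvage your approach, the missing ingredient is precisely such a reversal/terminus tool to justify the vertex exchange; alternatively, reorganize the proof so that the exchange happens once, up front, on the initial set, rather than once per offending force.
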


Motivated by Conjecture~\ref{conj:main}, which we confirm in this paper, we consider a broader question: for which families of graphs does the equality $Z_+(G) = Z(G)$ hold universally, even for all induced subgraphs? This question aligns with classic themes in graph theory, such as the study of perfect graphs, where the equality of clique number and chromatic number holds for every induced subgraph~\cite{west_introduction_2000}. To formalize this, we define a graph $G$ to be \emph{$(Z_+, Z)$--perfect} if $Z_+(H) = Z(H)$ for every induced subgraph $H \subseteq G$. As we demonstrate, claw-free graphs offer a complete characterization of $(Z_+, Z)$--perfect graphs, unifying and extending prior results on the relationship between $Z(G)$ and $Z_+(G)$ in claw-free graphs, such as $Z(G) = Z_+(G)$ for \emph{line graphs} (see~\cite{Fallat2016}) and the bounds $Z(G) \leq Z_+(G) \leq 2Z(G)$ (see~\cite{Wang2019}).

As part of proving Conjecture~\ref{conj:main}, we also establish in this paper an interesting result about positive semidefinite forcing sets.  A central focus in \cite{PIPs} is what can be said when forcing sets are vertex cuts.  Utilizing the concept of path bundles introduced in \cite{PIPs}, we explore the opposite and show in Lemma~\ref{lem:connected} that for any graph, there is a minimum positive semidefinite forcing set of that graph whose complement is connected. This connectivity plays a key role in the proof of Conjecture~\ref{conj:main}.

The remainder of this paper is organized as follows. In Section~\ref{sec:notation-and-terminology}, we provide the necessary definitions and preliminaries, including \emph{relaxed chronologies} and \emph{path bundles}. Section~\ref{sec:the-lemmas} provides lemmas regarding positive semidefinite forcing needed to prove our main result. Section~\ref{sec:the-theorem} contains the proof of our main theorem and its corollary. Finally, in Section~\ref{sec:conclusion}, we give concluding remarks.

%%%%%%%%%%%%%%%%%%%%%%%%%%%%%%%%%%%%%%%%%
%%%%%%%%%%%%%%%%%%%%%%%%%%%%%%%%%%%%%%%%%

\section{\textbf{RELAXED CHRONOLOGIES AND PATH BUNDLES}}\label{sec:notation-and-terminology}
For any common graph terminology or notation not explicitly defined in this paper, we refer the reader to~\cite{west_introduction_2000}, and for undefined zero forcing terminology~\cite{Hogben2022Inverse}. 
Let $S_+(G, B)$ denote the set of valid positive semidefinite forces $\psdforce{u}{v}$ when exactly $B$ is colored blue, where $u \in B$ and $v \in V(G)\setminus B$. A \emph{relaxed chronology}, denoted by $\F$, is a sequence consisting of the sets of positive semidefinite forces applied at each time step $t$ until a terminal state is reached. The elements of this sequence, denoted $F^{(t)}$, satisfy
\[
F^{(t)} \subseteq S_+(G, E_\F^{[t-1]}),
\]
where $E_\F^{[t-1]}$ is the set of blue vertices at the start of time step $t$. In addition, note that for each vertex $v$ which is not blue at time $t=0$, there exists a unique $t$ and a unique force $\psdforce{u}{v}$ such that $\psdforce{u}{v} \in F^{(t)}$. Finally, a special case of relaxed chronologies of particular importance occurs when one requires that exactly one force occurs during each time step. Such a relaxed chronology is called a \emph{chronological list of forces}. 
\begin{figure}[ht]
    \centering
    \begin{tikzpicture}[scale=1.4, every node/.style={circle, draw, minimum size=6mm, font=\small}, label/.style={shape=rectangle, draw=none, font=\small},initial blue/.style={node font=\bfseries,draw=blue,line width=1.8pt,fill=blue!30},forced blue/.style={draw=blue,line width=1pt,fill=blue!10}]
        % Time step 0 (Top Left)
        \node[fill=white] (v1) at (-2, 0) {1};
        \node[initial blue] (v2) at (-1, 1) {2};
        \node[fill=white] (v3) at (-1, -1) {3};
        \node[initial blue] (v4) at (0, 0) {4};
        \draw (v1) -- (v2) -- (v3) -- (v1);
        \draw (v2) -- (v4) -- (v3);
        
        \node[fill=white] (v5) at (1, 0) {5};
        \node[initial blue] (v6) at (2, 1) {6};
        \node[fill=white] (v7) at (2, -1) {7};
        \node[fill=white] (v8) at (3, 0) {8};
        \draw (v5) -- (v6) -- (v7) -- (v5);
        \draw (v6) -- (v8) -- (v7);
        \draw (v4) -- (v5);
        \node[label] at (0.5, -1.65) {Time step $t=0$: Initial $B=\{2, 4, 6\}$};

        % Time step 1 (Top Right)
        \node[fill=white] (w1) at (4, 0) {1};
        \node[initial blue] (w2) at (5, 1) {2};
        \node[forced blue] (w3) at (5, -1) {3};
        \node[initial blue] (w4) at (6, 0) {4};
        \draw (w1) -- (w2) -- (w3) -- (w1);
        \draw (w2) -- (w4) -- (w3);

        \node[forced blue] (w5) at (7, 0) {5};
        \node[initial blue] (w6) at (8, 1) {6};
        \node[fill=white] (w7) at (8, -1) {7};
        \node[fill=white] (w8) at (9, 0) {8};
        \draw (w5) -- (w6) -- (w7) -- (w5);
        \draw (w6) -- (w8) -- (w7);
        \draw (w4) -- (w5);
        \draw[->, very thick, blue] (w4) -- (w5); % 4 psd-forces 5
        \draw[->, very thick, blue] (w4) -- (w3); % 4 psd-forces 3
        \node[label] at (6.5, -1.65) {Time step $t=1$: $F^{(1)}=\{\psdforce{4}{3}, \psdforce{4}{5}\}$};

    \begin{scope}[yshift=0.28cm]
        % Time step 2 (Bottom Left)
        \node[forced blue] (x1) at (-2, -4) {1};
        \node[initial blue] (x2) at (-1, -3) {2};
        \node[forced blue] (x3) at (-1, -5) {3};
        \node[initial blue] (x4) at (0, -4) {4};
        \draw (x1) -- (x2) -- (x3) -- (x1);
        \draw (x2) -- (x4) -- (x3);

        \node[forced blue] (x5) at (1, -4) {5};
        \node[initial blue] (x6) at (2, -3) {6};
        \node[forced blue] (x7) at (2, -5) {7};
        \node[fill=white] (x8) at (3, -4) {8};
        \draw (x5) -- (x6) -- (x7) -- (x5);
        \draw (x6) -- (x8) -- (x7);
        \draw (x4) -- (x5);
        \draw[->, very thick, blue] (x3) -- (x1); % 3 psd-forces 1
        \draw[->, very thick, blue] (x5) -- (x7); % 5 psd-forces 7
        \node[label] at (0.5, -5.65) {Time step $t=2$: $F^{(2)}=\{\psdforce{3}{1}, \psdforce{5}{7}\}$};

        % Time step 3 (Bottom Right)
        \node[forced blue] (y1) at (4, -4) {1};
        \node[initial blue] (y2) at (5, -3) {2};
        \node[forced blue] (y3) at (5, -5) {3};
        \node[initial blue] (y4) at (6, -4) {4};
        \draw (y1) -- (y2) -- (y3) -- (y1);
        \draw (y2) -- (y4) -- (y3);

        \node[forced blue] (y5) at (7, -4) {5};
        \node[initial blue] (y6) at (8, -3) {6};
        \node[forced blue] (y7) at (8, -5) {7};
        \node[forced blue] (y8) at (9, -4) {8};
        \draw (y5) -- (y6) -- (y7) -- (y5);
        \draw (y6) -- (y8) -- (y7);
        \draw (y4) -- (y5);
        \draw[->, very thick, blue] (y6) -- (y8); % 6 psd-forces 8
        \node[label] at (6.5, -5.65) {Terminal time step $\tau=3$: $F^{(3)}=\{\psdforce{6}{8}\}$};
    \end{scope}
    \end{tikzpicture}
    \caption{An example of a relaxed chronology of forces made by applying the positive semidefinite forcing rule on two diamonds connected by a single edge. Each time step shows the propagation of blue vertices.}
    \label{fig:psd-forces-diamonds-grid}
\end{figure}
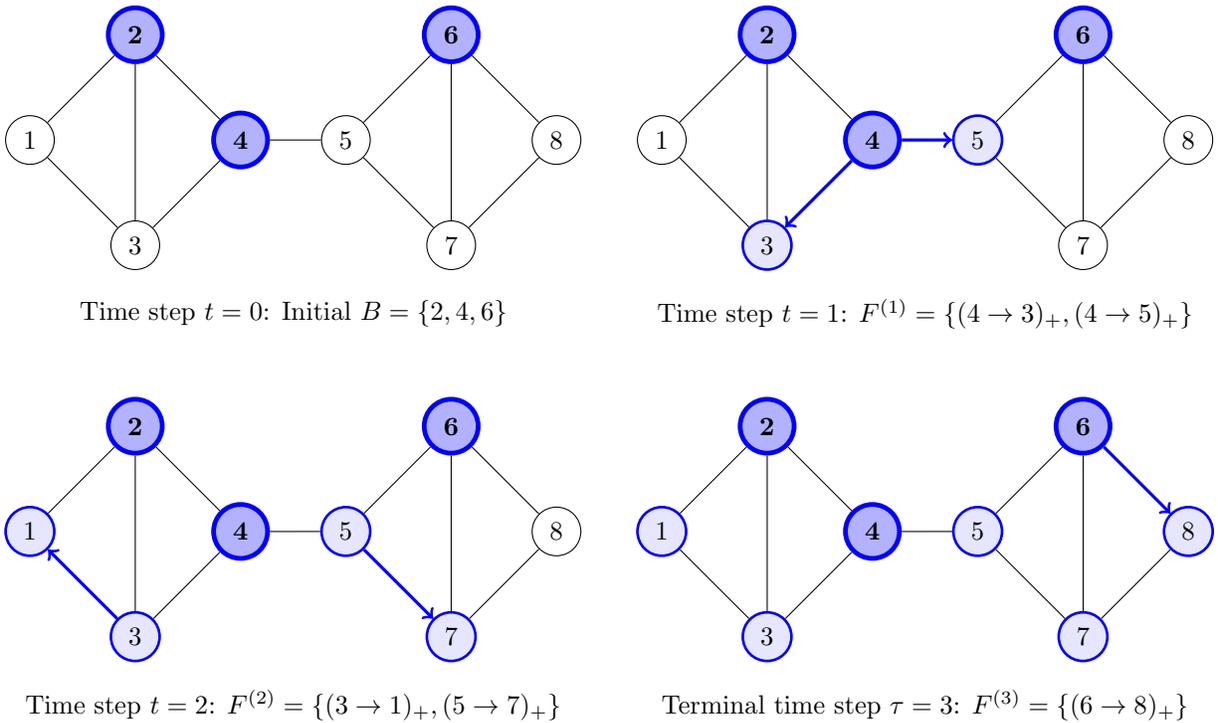

The relaxed chronology $\F$ governs the propagation of blue vertices over time, culminating in all vertices of $G$ being blue if the initial set $B$ is a positive semidefinite forcing set; see Figure~\ref{fig:psd-forces-diamonds-grid} for an example. The relaxed chronology $\F$ induces an \emph{expansion sequence}, $\big\{E_\F^{[t]}\big\}_{t=0}^{\tau}$, which tracks the progression of blue vertices over time. The sequence is defined recursively as:
\[
E_\F^{[0]} = B, \quad \text{and} \quad E_\F^{[t]} = E_\F^{[t-1]} \cup \{v : \psdforce{u}{v} \in F^{(t)} \text{ for some } u \in V(G)\},
\]
for all $t$ with $1 \leq t \leq \tau$. Here, $E_\F^{[t]}$ represents the set of blue vertices after time step $t$, with the propagation governed by the forces in $\F$.

For an induced subgraph $H \subseteq G$, the \emph{restriction} of a relaxed chronology $\F$ to $H$, denoted $\F|_H$, is the ordered collection of sets of forces $\F|_H = \{F^{(t)}|_H\}_{t=1}^{\tau}$. For each $t$ with $1 \leq t \leq \tau$, the set of restricted forces $F^{(t)}|_H$ is defined as:
\[
F^{(t)}|_H = \{\psdforce{u}{v} \in F^{(t)} : u, v \in V(H)\}.
\]
The restriction $\F|_H$ identifies all forces in $\F$ that occur entirely within the induced subgraph $H$. This concept is essential for analyzing the behavior of positive semidefinite forcing within specific substructures of a graph. By focusing on the restricted chronology $\F|_H$, we gain insight into how local propagation dynamics contribute to the global positive semidefinite forcing process.

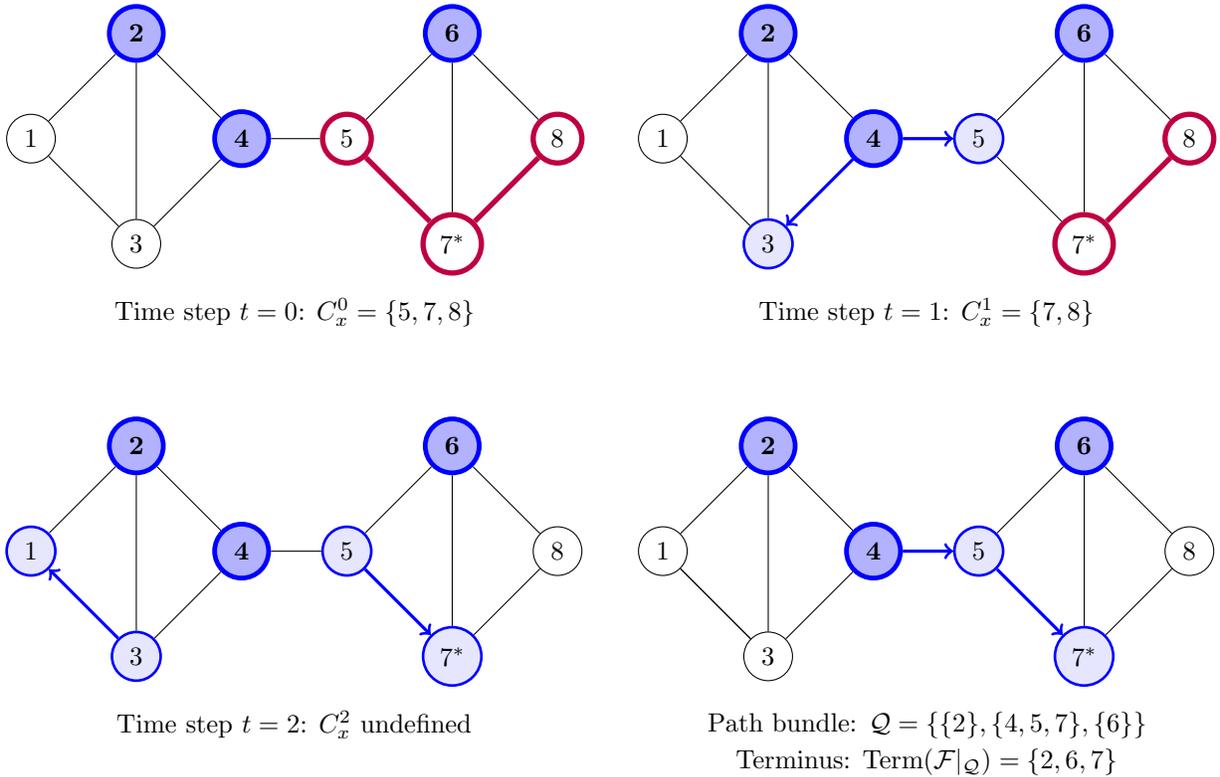
\begin{figure}[ht]
    \centering
    \begin{tikzpicture}[scale=1.4, every node/.style={circle, draw, minimum size=6mm, font=\small}, label/.style={shape=rectangle, draw=none, font=\small}, initial blue/.style={node font=\bfseries,draw=blue,line width=1.8pt,fill=blue!30},forced blue/.style={draw=blue,line width=1pt,fill=blue!10}]
        % Time step 0 (Top Left)
        \node[fill=white] (v1) at (-2, 0) {1};
        \node[initial blue] (v2) at (-1, 1) {2};
        \node[fill=white] (v3) at (-1, -1) {3};
        \node[initial blue] (v4) at (0, 0) {4};
        \draw (v1) -- (v2) -- (v3) -- (v1);
        \draw (v2) -- (v4) -- (v3);

        \node[fill=white] (v5) at (1, 0) {5};
        \node[draw=purple, super thick, fill=white] (v5) at (1, 0) {5}; % Component vertex
        \node[initial blue] (v6) at (2, 1) {6};
        \node[draw=purple, super thick, fill=white] (v7) at (2, -1) {$7^*$}; % Fixed vertex x and component vertex
        \node[fill=white] (v8) at (3, 0) {8};
        \node[draw=purple, super thick, fill=white] (v8) at (3, 0) {8}; % Component vertex
        \draw (v5) -- (v6) -- (v7) -- (v5);
        \draw (v6) -- (v8) -- (v7);
        \draw (v4) -- (v5);
        \draw[super thick, purple] (v5) -- (v7) -- (v8); % Cx0 boundary
        \node[label] at (0.5, -1.65) {Time step $t=0$: $C_x^0 = \{5, 7, 8\}$};

        % Time step 1 (Top Right)
        \node[fill=white] (w1) at (4, 0) {1};
        \node[initial blue] (w2) at (5, 1) {2};
        \node[forced blue] (w3) at (5, -1) {3};
        \node[initial blue] (w4) at (6, 0) {4};
        \draw (w1) -- (w2) -- (w3) -- (w1);
        \draw (w2) -- (w4) -- (w3);

        \node[forced blue] (w5) at (7, 0) {5};
        \node[initial blue] (w6) at (8, 1) {6};
        \node[draw=purple, super thick, fill=white] (w7) at (8, -1) {$7^*$}; % Fixed vertex x and component vertex
        \node[draw=purple, super thick, fill=white] (w8) at (9, 0) {8}; % Component vertex
        \draw (w5) -- (w6) -- (w7) -- (w5);
        \draw (w6) -- (w8) -- (w7);
        \draw (w4) -- (w5);
        \draw[->, very thick, blue] (w4) -- (w5); % 4 psd-forces 5
        \draw[->, very thick, blue] (w4) -- (w3); % 4 psd-forces 3
        \draw[super thick, purple] (w7) -- (w8); % Cx1 boundary
        \node[label] at (6.5, -1.65) {Time step $t=1$: $C_x^1 = \{7, 8\}$};

    \begin{scope}[yshift=-0.42cm]
        % Time step 2 (Bottom left)
        \node[forced blue] (x1) at (-2, -3.5) {1};
        \node[initial blue] (x2) at (-1, -2.5) {2};
        \node[forced blue] (x3) at (-1, -4.5) {3};
        \node[initial blue] (x4) at (0, -3.5) {4};
        \draw (x1) -- (x2) -- (x3) -- (x1);
        \draw (x2) -- (x4) -- (x3);

        \node[forced blue] (x5) at (1, -3.5) {5};
        \node[initial blue] (x6) at (2, -2.5) {6};
        \node[forced blue] (x7) at (2, -4.5) {$7^*$}; % Fixed vertex x
        \node[fill=white] (x8) at (3, -3.5) {8}; % component vertex
        \draw (x5) -- (x6) -- (x7) -- (x5);
        \draw (x6) -- (x8) -- (x7);
        \draw (x4) -- (x5);
        \draw[->, very thick, blue] (x5) -- (x7); % 4 psd-forces 5
        \draw[->, very thick, blue] (x3) -- (x1); % 3 psd-forces 1
        \node[label] at (0.5, -5.15) {Time step $t=2$: $C_x^2 \text{ undefined}$};

        % Bundles 
        \node[fill=white] (z1) at (4, -3.5) {1};
        \node[initial blue] (z2) at (5, -2.5) {2};
        \node[fill=white] (z3) at (5, -4.5) {3};
        \node[initial blue] (z4) at (6, -3.5) {4};
        \draw (z1) -- (z2) -- (z3) -- (z1);
        \draw (z2) -- (z4) -- (z3);

        \node[forced blue] (z5) at (7, -3.5) {5};
        \node[initial blue] (z6) at (8, -2.5) {6};
        \node[forced blue] (z7) at (8, -4.5) {$7^*$}; % Fixed vertex x
        \node[fill=white] (z8) at (9, -3.5) {8}; % component vertez
        \draw (z5) -- (z6) -- (z7) -- (z5);
        \draw (z6) -- (z8) -- (z7);
        \draw (z4) -- (z5);
        \draw[->, very thick, blue] (z4) -- (z5);
        \draw[->, very thick, blue] (z5) -- (z7); % 4 psd-forces 5
        \draw(z3) -- (z1); % 3 psd-forces 1
        % \node[label] at (0.5, -5.0) {Time step $k=2$: $C_x^2 \text{ undefined}$};
        \node[label] at (6.5, -5.15) {Path bundle: $\mathcal{Q} = \{\{2\}, \{4, 5, 7\}, \{6\}\}$};
        \node[label] at (6.5, -5.5) {Terminus: $\Term(\F|_\mathcal{Q}) = \{2, 6, 7\}$};
    \end{scope}
    \end{tikzpicture}
    \caption{An example illustrating the construction of a path bundle induced by the vertex $x=7$. The purple vertices and edges highlight the component $C_x^t$ containing $x$ at each time step. }
    \label{fig:fixed-vertex-connected-components}
\end{figure}

Fix a vertex $x \in V(G)$, and suppose that $x$ becomes blue at time step $t_x$ of a relaxed chronology $\F$. For each $t \in \{0, \dots, t_x-1\}$, let $C_x^t$ denote the component of $G - E_\F^{[t]}$ containing $x$; see Figure~\ref{fig:fixed-vertex-connected-components} for an illustration. Using this sequence of components, we construct a collection of paths, denoted $\mathcal{Q}(\mathcal F; x)$, as follows. 
 Initially, let $\{v_i^0\}_{i=1}^{|B|}$ represent the vertices in the initial blue set $B$, and define $\mathcal{Q}^{[0]}(\mathcal F; x) = \{Q_i^{[0]}(\mathcal F; x)\}_{i=1}^{|B|}$, where each $Q_i^{[0]}(\mathcal F; x)$ is a single-vertex path consisting of $v_i^0$. At time step $t+1$, if a vertex $v_i^t$ forces a vertex $w \in C_x^t$ (i.e., $\psdforce{v_i^t}{w} \in F^{(t+1)}$), we define $v_i^{t+1} = w$ and extend the path $Q_i^{[t]}(\mathcal F; x)$ by appending $v_i^{t+1}$. For all other paths in $\mathcal{Q}^{[t+1]}(\mathcal F; x)$, $Q_i^{[t+1]}(\mathcal F; x)$ remains unchanged, with $v_i^{t+1} = v_i^t$. This process iterates until the final time step $t_x$, yielding the collection $\mathcal{Q}^{[t_x]}(\mathcal F; x)$.

The resulting collection of paths, denoted $\mathcal{Q}(\mathcal{F};x) = \mathcal{Q}^{[t_x]}(\mathcal F; x)$, is called the \emph{path bundle of $\F$ induced by $x$}. If no specific vertex $x$ is mentioned, we refer to such structures more generally as \emph{path bundles}. The \emph{terminus} of the restriction $\F|_{\mathcal{Q}}$, denoted $\Term(\F|_{\mathcal{Q}})$, is the set of vertices in $G$ that do not perform any forces in $\F|_{\mathcal{Q}(\mathcal{F};x)}$. These vertices represent the endpoints of propagation paths within $\mathcal{Q}$, providing insight into how positive semidefinite forcing processes terminate within specific induced structures of $G$; see Figure~\ref{fig:fixed-vertex-connected-components} for an illustration.

%%%%%%%%%%%%%%%%%%%%%%%%%%%%%%%%%%%%%%%%%
%%%%%%%%%%%%%%%%%%%%%%%%%%%%%%%%%%%%%%%%%

\section{\textbf{THE LEMMAS}}\label{sec:the-lemmas}
The following result connects path bundles and positive semidefinite forcing sets.
\begin{lemma}[\cite{PIPs}]\label{lem:reverse}
Let $G$ be a graph, and let $B$ be a positive semidefinite forcing set of $G$. If $\mathcal{F}$ is a relaxed chronology of forces for $B$ on $G$, and $\mathcal{Q}$ is the path bundle of $\mathcal{F}$ induced by some vertex $x \in V(G)$, then the set $\Term(\mathcal{F}|_{\mathcal{Q}(\mathcal F; x)})$ is a positive semidefinite forcing set of $G$.
\end{lemma}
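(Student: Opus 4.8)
The plan is to reduce the claim to a reversal argument on the paths of $\mathcal{Q}$. Write $V(\mathcal{Q})$ for the union of the vertex sets of the paths in the bundle. First I would record the structural features that follow directly from the construction: the paths $Q_1,\dots,Q_{|B|}$ are pairwise vertex-disjoint, each $Q_i$ starts at a distinct vertex $v_i^0 \in B$ and ends at a vertex $e_i$, the endpoint set $\{e_1,\dots,e_{|B|}\}$ is exactly $\Term(\mathcal{F}|_{\mathcal{Q}})$, and $x$ is one of the endpoints $e_i$ (the bundle is built up to the very step at which $x$ is forced into its component, so its path terminates at $x$). In particular $B \subseteq V(\mathcal{Q})$. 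Because any superset of a positive semidefinite forcing set is again one---extra initial blue vertices only remove vertices from the white graph, which can never invalidate a legal force---the set $V(\mathcal{Q})$ is itself a positive semidefinite forcing set of $G$. Hence it suffices to show that, starting from $\Term(\mathcal{F}|_{\mathcal{Q}})$, the positive semidefinite process colors every vertex of $V(\mathcal{Q})$: once all of $V(\mathcal{Q}) \supseteq B$ is blue, monotonicity lets the process finish coloring $G$.

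To color $V(\mathcal{Q})$ from the terminus I would run the bundle backwards. For each force $\psdforce{v_i^{j-1}}{v_i^{j}}$ recorded along a path at time $t$ of $\mathcal{F}$, introduce the reversed force $\psdforce{v_i^{j}}{v_i^{j-1}}$; group these reversed forces by the original time $t$ and apply them in order of decreasing $t$. This scheduling is legitimate as a relaxed chronology: the forced vertices $v_i^{j-1}$ occurring at a given reverse step are distinct because the paths are vertex-disjoint, and each forcer $v_i^{j}$ is already blue, since it either lies in $\Term(\mathcal{F}|_{\mathcal{Q}})$ or was recolored at an earlier (that is, larger) reverse time. Here one uses the fact, implicit in the definition of the bundle, that every vertex forced into the component chain $C_x^{0} \supseteq C_x^{1} \supseteq \cdots$ is forced by the current endpoint of a unique path, so that reversing the recorded forces recovers each path vertex exactly once.

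What must be verified is that each reversed force is a legal positive semidefinite force at the moment it is applied, and this is where the component structure $C_x^{t}$ carries the argument; it is also the main obstacle. I would prove an invariant asserting that, after the reverse process has undone all forces of $\mathcal{F}$ of time greater than $t$, the colored vertices of $V(\mathcal{Q})$ form a time-shifted mirror of the forward expansion, so that the white vertices of $V(\mathcal{Q})$ at this stage reconstitute the portion of $C_x^{\,t-1}$ lying on the bundle. Granting this, consider the reversed force $\psdforce{v_i^{j}}{v_i^{j-1}}$ scheduled at time $t$. In $\mathcal{F}$ the force $\psdforce{v_i^{j-1}}{v_i^{j}}$ was legal exactly because $v_i^{j}$ was the unique neighbor of $v_i^{j-1}$ in the white component $C_x^{\,t-1}$ containing $x$; the invariant converts this forward uniqueness into the dual statement that, in the reverse process, $v_i^{j-1}$ is the only white neighbor of the now-blue vertex $v_i^{j}$ in $v_i^{j-1}$'s white component. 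The delicate point is that vertices outside $V(\mathcal{Q})$ remain white throughout this phase, so one must check that they never link $v_i^{j}$ to an additional white neighbor inside that component; this is controlled by the very adjacencies that made $v_i^{j}$ the unique in-component neighbor in the forward direction.

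Establishing this correspondence between the reverse white components and the nested components $C_x^{0} \supseteq C_x^{1} \supseteq \cdots$ is the technical heart of the proof. Once it is in place, every reversed force is valid, the reverse process colors all of $V(\mathcal{Q})$ starting from $\Term(\mathcal{F}|_{\mathcal{Q}})$, and combining this with the reduction of the first paragraph shows that $\Term(\mathcal{F}|_{\mathcal{Q}})$ is a positive semidefinite forcing set of $G$, as claimed.
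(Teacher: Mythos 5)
The paper never proves this lemma itself (it is imported wholesale from \cite{PIPs}), so your attempt can only be judged on its own merits. Your structural bookkeeping is all correct: the paths of $\mathcal{Q}$ are pairwise vertex-disjoint, every vertex forced into the component chain is forced by the current endpoint of a unique path, $\Term(\F|_{\mathcal{Q}})$ is exactly the set of path endpoints (so it has size $|B|$, and contains $x$), $B \subseteq V(\mathcal{Q})$, supersets of positive semidefinite forcing sets are positive semidefinite forcing sets, and the reverse schedule (reversed path forces grouped by forward time, applied in decreasing order, with every forcer already blue when it acts) is the right device. This is almost certainly the intended reversal strategy.

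The gap is that the only step carrying real difficulty --- validity of each reversed force --- is deferred to an ``invariant'' that you never prove, and the invariant as stated is both false and beside the point. At the reverse stage corresponding to forward time $t$, the blue vertices are exactly $\bigcup_j \{v_j^r : r \ge t\}$; consequently the \emph{white} bundle vertices are those appended before time $t$ that are no longer endpoints, and these all lie in $E_{\F}^{[t-1]}$, i.e.\ \emph{outside} $C_x^{t-1}$. So it is the blue bundle vertices, not the white ones, that account for ``the portion of $C_x^{t-1}$ lying on the bundle''; and even after this correction the invariant is immediate from your schedule and does not address the actual danger, which (as you half-notice) concerns vertices \emph{off} the bundle, all of which stay white during this phase. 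Concretely, the reversed force $\psdforce{v_i^t}{v_i^{t-1}}$ fails precisely if some other neighbor $y$ of $v_i^t$ lies in the white component of $v_i^{t-1}$; the forward rule bounds the neighbors of $v_i^{t-1}$ in $C_x^{t-1}$, \emph{not} those of $v_i^t$, which may be numerous, so your closing appeal to ``the very adjacencies that made $v_i^j$ the unique in-component neighbor in the forward direction'' points at the wrong mechanism. What actually closes the argument is uniqueness of forcers together with the nesting $C_x^{t-1} \subseteq C_x^{s}$ for $s \le t-1$: (i) any neighbor $y$ of $v_i^t$ that was blue in the forward process at time $t-1$ must itself be a bundle vertex (it was white and adjacent to the chain when it was forced, hence was forced into the chain), and if $y$'s path were extended after $y$ and by time $t$, the extension force would have to target $v_i^t$ (a white neighbor of $y$ inside the relevant chain component), contradicting that $v_i^{t-1}$ is the unique forcer of $v_i^t$; hence all such $y$ are still endpoints $v_j^t$ and are blue in the reverse process; (ii) any white path from a non-bundle vertex of $C_x^{t-1}$ to $v_i^{t-1}$ must cross the boundary of $C_x^{t-1}$, and the same forcer-uniqueness argument applied at the crossing edge shows the vertex just inside the boundary would have to be an endpoint $v_j^t$, hence blue --- a contradiction. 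Without an argument of this kind your proposal is a plausible plan whose central verification is missing, not a proof.
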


The next lemma shows that any minimum positive semidefinite forcing set of a connected graph can be modified to maximize the size of a component in its complement. 

\begin{lemma}\label{lem:disconnected}
Let $G$ be a connected graph, and let $S \subseteq V(G)$ be a minimum positive semidefinite forcing set of $G$. If the induced subgraph $G - S$ is disconnected, then given a component $C$ of $G-S$, there exists another minimum positive semidefinite forcing set $S' \subseteq V(G)$ such that $G - S'$ has a component $C'$ containing $C$ as a proper subgraph.
\end{lemma}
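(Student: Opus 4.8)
The plan is to apply the path-bundle reversal of Lemma~\ref{lem:reverse} with the inducing vertex $x$ chosen in a component of $G - S$ different from $C$. I would fix any relaxed chronology $\F$ of forces for $S$ on $G$, let $D$ be a component of $G - S$ with $D \neq C$, pick $x \in D$, and set $S' = \Term(\F|_{\mathcal{Q}(\F;x)})$. By Lemma~\ref{lem:reverse}, $S'$ is a positive semidefinite forcing set. Since the bundle $\mathcal{Q}(\F;x)$ consists of exactly one path per initial blue vertex, and these paths are vertex-disjoint (each forced vertex is appended to a unique path), the terminus contributes exactly one endpoint per path; hence $|S'| = |B| = |S| = Z_+(G)$, so $S'$ is again a minimum positive semidefinite forcing set. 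This settles the ``minimum'' requirement and reduces the lemma to a statement about the complement $G - S'$.

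Next I would show $C \subseteq V(G) \setminus S'$, i.e.\ that $C$ survives intact in the complement. The key observation is that no vertex of $C$ ever lies in the component $C_x^t$ of $x$: any $u$--$x$ path in $G - E_\F^{[t]}$ with $u \in C$ would have to leave $C$, but every neighbor of $C$ outside $C$ lies in $S \subseteq E_\F^{[t]}$ and is therefore deleted, so $C$ and $x$ lie in different components of $G - E_\F^{[t]}$ for every $t < t_x$. Consequently no force recorded in the bundle ever enters $C$, so no vertex of $C$ is appended to any path of $\mathcal{Q}(\F;x)$; since $\Term(\F|_{\mathcal{Q}(\F;x)})$ is contained in the vertex set of the bundle, we obtain $C \cap S' = \emptyset$.

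It remains to produce the proper enlargement, which I expect to be the main obstacle. Writing $N = N(C) \cap S$ (nonempty because $G$ is connected and $C \subsetneq G - S$), the component $C'$ of $G - S'$ containing $C$ satisfies $C' \supsetneq C$ precisely when some $y \in N$ lies in $V(G) \setminus S'$; such a $y$ is adjacent to $C$ and hence merges with $C$ into a single larger component. In the reversal language, $y \notin S'$ means that the backward forcing chain terminating at $x$ is rooted at $y$; equivalently, $y$ performs a force in $\F$ that leaves $C$. Here one uses that any chain a boundary vertex sends \emph{into} $C$ must stay inside $C$, since a vertex of $C$ can only force white vertices and all its non-$C$ neighbors lie in the permanently blue set $S$. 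Thus the whole argument reduces to choosing $\F$ and the target component $D \ni x$ so that some boundary vertex of $C$ forces outward along a chain reaching $x$.

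The delicate point is guaranteeing that such a configuration exists. My plan is to begin with a shortest path from $C$ to another component and track the first force of $\F$ that escapes the closed region $C \cup N$: the forcer of the first vertex outside $C$ that is reached from $C$'s boundary is forced to be a vertex of $N$, and I would take $x$ to be a vertex of the component into which this force lands, chosen as far along the resulting chain as the chronology allows. The technical work is to ensure, possibly after locally reordering or re-routing $\F$ using the freedom in the positive semidefinite color-change rule, that at least one boundary vertex of $C$ acquires a unique white neighbor outside $C$ within its component and performs that outward force; this is exactly where the ``only neighbor in its component'' subtlety of positive semidefinite forcing must be handled with care. Granting such a choice, the endpoint count gives $|S'| = |S|$, the second step gives $C \subseteq V(G)\setminus S'$, and the freed boundary vertex $y$ witnesses $C \subsetneq C'$, completing the proof.
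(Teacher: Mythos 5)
Your proposal follows the same skeleton as the paper's proof---apply the path-bundle reversal of Lemma~\ref{lem:reverse} to get a new minimum set $S'$, show the bundle never touches $C$ so that $S' \cap V(C) = \emptyset$, and then free one boundary vertex of $C$ to get the strict enlargement---and your first two steps are sound. The disjointness argument (every neighbor of $C$ outside $C$ lies in $S$, which is blue at all times, so $C_x^t$ never meets $C$ and no path of the bundle enters $C$) is exactly the paper's reasoning, and the endpoint count for minimality is fine.

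However, the third step is a genuine gap, and it is precisely the heart of the lemma. You need some $y \in N(V(C)) \cap S$ to perform a force inside the bundle, and you concede this yourself (``Granting such a choice\dots''). The problem is that for an arbitrary chronology $\F$ and an arbitrary target component $D$, no such configuration need exist: the forces coloring $D$ may all be rooted at vertices of $S$ far from $C$, and a boundary vertex of $C$ whose white neighbors all lie in $C$ can never force outward no matter how you reorder $\F$. Your plan to ``track the first force that escapes $C \cup N$'' fails for the same reason---there may be no such force. The paper closes this gap with a concrete construction you are missing: pick $x \in N(V(C)) \cap S$ having a neighbor outside $V(C) \cup (N(V(C)) \cap S)$ (exists by connectivity of $G$); let $t$ be the first time step at which all neighbors of $x$ outside $C$ are blue; use \emph{minimality of $S$} to prove $t > 0$ (if $t=0$, then removing such a neighbor $y \in S$ from $S$ yields a smaller forcing set, since $\psdforce{x}{y}$ is then a valid first force); let $w^*$ be the neighbor of $x$ forced at time $t$, observe that $\psdforce{x}{w^*}$ is itself valid at that moment (at time $t-1$, $w^*$ is $x$'s unique white neighbor outside $C$, and components meeting $C$ are sealed off from $w^*$), and \emph{modify} $\F$ to $\F'$ by re-attributing that one force to $x$. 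Inducing the bundle at $w^*$ under $\F'$ then guarantees $x$ performs a force in the restriction, so $x \notin S' = \Term(\F'|_{\mathcal{Q}(\F';w^*)})$, and $x$ being adjacent to $C$ gives $V(C) \subsetneq V(C')$. Note in particular that minimality of $S$ is used not only for $|S'| = |S|$ but essentially, to rule out the degenerate case $t = 0$ in which no re-routing is possible; your sketch never invokes it for this purpose, which is a structural sign that the argument as outlined cannot be completed without the paper's additional idea.
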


\begin{proof}
Let $G$ be a connected graph, and let $S \subseteq V(G)$ be a minimum positive semidefinite forcing set of $G$. Suppose $G - S$ is disconnected, and let $C$ be a component of $G - S$.  The only neighbors of vertices in $V(C)$ outside of $V(C)$ must lie in $S$. Thus, we define $S_0 = N(V(C)) \cap S$ as the set of these neighbors.

Observe that $S_0$ forms a vertex cut in $G$ that separates $V(C)$ from $V(G) \setminus (S_0 \cup V(C))$. Specifically, any path in $G$ between a vertex in $V(C)$ and a vertex in $V(G) \setminus (S_0 \cup V(C))$ must pass through $S_0$. Furthermore, since $G$ is connected, there must exist a vertex $x \in S_0$ such that $x$ is adjacent to at least one vertex in $V(G) \setminus (S_0 \cup V(C))$. If no such vertex exists, the subgraph induced by $S_0 \cup V(C)$ would form a component of $G$, contradicting the assumption that $G$ is connected. 

Now, let $\mathcal{F}$ be a chronological list of forces for $S$ with expansion sequence $\{E_{\mathcal{F}}^{[i]}\}_{i=0}^\tau$.  Consider the progression of the set of vertices $E_{\mathcal{F}}^{[i]}$ that are blue at time step $i$.  At some time step $t$, every neighbor of $x$ not belonging to $C$ will have been colored blue; we choose $t$ to be the first such time step.  Since $x\in S$ is initially blue, for our chosen $t$ we have that
\[
N_G[x] \subseteq E_{\mathcal{F}}^{[t]} \cup V(C).
\]

We claim that $t > 0$. To show this, suppose instead that $t = 0$. If $t = 0$, then $E_{\mathcal{F}}^{[0]} = S$, so $N_G[x] \subseteq S \cup V(C)$. Since $x$ is adjacent to at least one vertex in $V(G) \setminus (S_0 \cup V(C))$, there must exist a vertex $y \in N(x)$ such that $y \in V(G) \setminus (S_0 \cup V(C))$. By definition, the vertex $y$ belongs to both $S \cup V(C)$, which includes all vertices that are initially blue or in $V(C)$, and $V(G) \setminus (S_0 \cup V(C))$, which excludes the vertices in $S_0$ and $V(C)$. Thus, $y \in S \setminus S_0$.

Define a new set $S' = S \setminus \{y\}$ by removing the vertex $y$ from $S$. Color the vertices in $S'$ blue and those in $V(G) \setminus S'$ white. Observe that the component of $G - S'$ containing $y$ intersects the neighborhood of $x$ only at $y$ itself. This implies that $\psdforce{x}{y}$ is a valid force at time $t = 0$, with $S'$ as the set of initially blue vertices. Hence, $x$ can force $y$ to become blue under the positive semidefinite forcing rule. After this color change, the resulting set of blue vertices is precisely the original set $S$. Since $S$ is a positive semidefinite forcing set, $S'$ must also be a positive semidefinite forcing set of $G$. However, this contradicts the assumption that $S$ is a minimum positive semidefinite forcing set, as $|S'| < |S|$. Thus, $t > 0$.

Therefore, we must have that $t > 0$. Let $w^*$ denote the neighbor of $x$ forced at time step $t$ in the chronological list of forces $\mathcal{F}$. Since $t > 0$, $w^*$ is not initially blue but becomes blue during $\mathcal F$. The component $C_{w^*}^{t-1}$ of $G - E_{\mathcal{F}}^{[t-1]}$ containing $w^*$ intersects $N(x)$ only at $w^*$, i.e., $C_{w^*}^{t-1} \cap N(x) = \{w^*\}$. Thus, the force $\psdforce{x}{w^*}$ is valid for $E_{\mathcal{F}}^{[t-1]}$. 

Define a new chronological list of forces $\mathcal{F}'$ by applying the same forces as in $\mathcal{F}$, except at time step $t$. At time step $t$, instead of applying the original force $\psdforce{v}{w^*}$, apply the force $\psdforce{x}{w^*}$. In this way, $\mathcal{F}'$ agrees with $\mathcal{F}$ at all time steps except possibly $t$, where the forcing vertex is required to be $x$.  Since $\psdforce{x}{w^*}$ is valid for $E_{\mathcal{F}}^{[t-1]}$, and since $S$ is a positive semidefinite forcing set, it follows that all of $V(G)$ will still become colored blue under the chronological list of forces $\mathcal{F}'$.

Next, consider the path bundle $\mathcal{Q}(\mathcal{F}'; w^*)$, capturing how $w^*$ becomes blue through the forcing process. Restrict $\mathcal{F}'$ to the forces occurring within this bundle, and let $S' = \Term(\mathcal{F}'|_{\mathcal{Q}(\mathcal{F}'; w^*)})$ denote the terminus of this restriction. By Lemma~\ref{lem:reverse}, $S'$ is a minimum positive semidefinite forcing set of $G$. Finally, observe that $C_{w^*}^0$, the component of $G - S$ containing $w^*$, is entirely contained within $V(G) \setminus (S \cup V(C))$. By definition, the sets $V(G) \setminus (S \cup V(C))$ and $V(C)$ are disjoint, so no vertex of $C_{w^*}^0$ can belong to $V(C)$. Thus, $S' \cap V(C) = \emptyset$, as $S'$ is the terminus of the restricted forces in $G[V(C_{w^*}^0) \cup S]$. Additionally, $x \notin S'$, as $x$ participates in forcing and thus does not belong to the terminus. Consequently, all vertices in $V(C)$ and $x$ belong to the same component of $G - S'$. Specifically, the component of $V(G) - S'$ containing $x$ strictly contains $C$, satisfying $V(C) \subsetneq V(C')$. This completes the proof.
\end{proof}

\begin{lemma}\label{lem:connected}
If $G$ is a connected graph, then there exists a minimum positive semidefinite forcing set $S \subseteq V(G)$ such that $G - S$ is connected.
\end{lemma}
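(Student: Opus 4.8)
The plan is to prove Lemma~\ref{lem:connected} by iterating Lemma~\ref{lem:disconnected} until the complement of the forcing set is connected. The key observation is that Lemma~\ref{lem:disconnected} provides exactly the mechanism we need: starting from any minimum positive semidefinite forcing set whose complement is disconnected, it produces another minimum forcing set whose complement has a strictly larger component.

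First I would let $S_0$ be any minimum positive semidefinite forcing set of $G$, which exists since $V(G)$ is trivially such a set. If $G - S_0$ is already connected, we are done. Otherwise, I would pick a component $C_0$ of $G - S_0$ and apply Lemma~\ref{lem:disconnected} to obtain a minimum positive semidefinite forcing set $S_1$ such that $G - S_1$ has a component $C_1$ with $V(C_0) \subsetneq V(C_1)$. If $G - S_1$ is connected we stop; otherwise we repeat, feeding $C_1$ (as a component of $G - S_1$) back into Lemma~\ref{lem:disconnected} to obtain $S_2$ and a component $C_2$ with $V(C_1) \subsetneq V(C_2)$, and so on. This produces a sequence of minimum positive semidefinite forcing sets $S_0, S_1, S_2, \dots$ together with components $C_0, C_1, C_2, \dots$ satisfying $V(C_0) \subsetneq V(C_1) \subsetneq V(C_2) \subsetneq \cdots$.

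The termination argument is the crux, and it is where I would be most careful. Since each $C_{i+1}$ properly contains $C_i$, the sizes $|V(C_i)|$ form a strictly increasing sequence of positive integers bounded above by $|V(G)|$. Hence the process cannot continue indefinitely: after finitely many steps, say at step $k$, we must reach a minimum positive semidefinite forcing set $S_k$ for which Lemma~\ref{lem:disconnected} can no longer be applied to increase the component. The subtle point to verify is that the only way the iteration can halt is when $G - S_k$ is connected, because Lemma~\ref{lem:disconnected} is applicable precisely when $G - S_k$ is disconnected. If $G - S_k$ were still disconnected, we could apply the lemma once more and obtain a strictly larger component, contradicting the maximality forced by the bounded increasing sequence. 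Therefore $G - S_k$ is connected, and $S = S_k$ is the desired minimum positive semidefinite forcing set.

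The main obstacle is purely bookkeeping rather than conceptual: one must confirm that at each stage the set fed into Lemma~\ref{lem:disconnected} genuinely satisfies that lemma's hypotheses (namely that it is a minimum positive semidefinite forcing set and that the chosen $C_i$ is indeed a component of its complement), and that the strict containment of vertex sets, not merely the existence of some enlarged component, is what drives termination. Since Lemma~\ref{lem:disconnected} hands back exactly a minimum positive semidefinite forcing set with a component properly containing the previous one, both hypotheses are preserved automatically, and the finiteness of $V(G)$ closes the argument.
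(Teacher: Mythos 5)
Your proposal is correct and is essentially identical to the paper's own proof: both iterate Lemma~\ref{lem:disconnected}, using the strictly increasing component sizes bounded by $|V(G)|$ to force termination, which can only occur when the complement is connected. No gaps.
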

\begin{proof}
Let $S \subseteq V(G)$ be an arbitrary minimum positive semidefinite forcing set of $G$. If $G - S$ is connected, then $S$ satisfies the conclusion of the lemma, and we are done. Suppose instead that $G - S$ is disconnected. By Lemma~\ref{lem:disconnected}, given a component $C$ of $G-S$, there exists another minimum positive semidefinite forcing set $S' \subseteq V(G)$ such that $G - S'$ has a strictly larger component than $C$. Specifically, there exists a component $C'$ of $G - S'$ that contains $C$ with $V(C) \subsetneq V(C')$. 

Replace $S$ with $S'$ and $C$ with $C'$, and repeat the process. At each step, the size of the component $C$ in $G - S$ strictly increases. Since $G$ is a finite graph, this process must terminate after finitely many steps. When the process terminates, we obtain a minimum positive semidefinite forcing set $S$ such that $G - S$ is connected,  completing the proof.
\end{proof}

%%%%%%%%%%%%%%%%%%%%%%%%%%%%%%%%%%%%%%%%%
%%%%%%%%%%%%%%%%%%%%%%%%%%%%%%%%%%%%%%%%%

\section{\textbf{THE MAIN THEOREM AND COROLLARY}}\label{sec:the-theorem}
This section presents our main result, Theorem~\ref{thm:main}, and its corollary, Corollary~\ref{cor:main}. 
\begin{theorem}\label{thm:main}
If $G$ is a connected and claw-free graph, then $Z_+(G) = Z(G)$.
\end{theorem}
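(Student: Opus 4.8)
The plan is to prove the inequality $Z(G) \le Z_+(G)$, since the reverse $Z_+(G) \le Z_+(G)$ holds for all graphs. So I would start with a minimum positive semidefinite forcing set and show it can be upgraded to a standard forcing set of the same size when $G$ is connected and claw-free. By Lemma~\ref{lem:connected}, I may choose a minimum positive semidefinite forcing set $S$ with $|S| = Z_+(G)$ such that $G - S$ is connected. The goal is then to argue that on a claw-free graph this single-component structure forces the positive semidefinite propagation to behave like standard propagation, yielding $Z(G) \le |S| = Z_+(G)$.

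The key idea I would pursue is that the gap between the two rules arises precisely when a blue vertex $u$ has two or more white neighbors that lie in \emph{different} components of $G[W]$: the positive semidefinite rule permits $u$ to force along one such neighbor while the standard rule does not. If $G - S$ (equivalently the white set at the start) is connected, then at time step $t=0$ every blue vertex sees all its white neighbors inside a single white component, so no positive semidefinite force that is not also a standard force can be ``hidden'' by component separation at the first step. I would run a relaxed chronology $\mathcal{F}$ for $S$ and track the components $G[W]$ as they shrink; the crux is to show that claw-freeness prevents the white graph from splitting into multiple components in a way that lets the positive semidefinite rule outperform the standard rule. Concretely, if a blue vertex $u$ has two white neighbors $v_1, v_2$ in distinct white components, and these cannot be joined through white vertices, then $u, v_1, v_2$ together with a common white neighbor (or the structure separating the components) should produce an induced claw $K_{1,3}$, contradicting claw-freeness.

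The main obstacle, and the step I expect to require the most care, is making the claw-freeness argument rigorous at an \emph{arbitrary} time step rather than just at $t = 0$. Connectivity of $G - S$ is an initial condition, but as forcing proceeds the white set $W$ shrinks and $G[W]$ may well disconnect; I cannot simply assert that $G[W]$ stays connected throughout. The delicate part is to show that whenever a positive semidefinite force $\psdforce{u}{v}$ is applied where $u$ has another white neighbor $v'$ in a different component of $G[W]$, claw-freeness either forbids this configuration or lets me reroute the chronology so that $u$'s forcing neighbor is unique among its white neighbors. I would likely need to invoke the path bundle machinery (Lemma~\ref{lem:reverse}) to reverse and restructure the chronology, using the connected complement from Lemma~\ref{lem:connected} as an invariant I can preserve, and then argue that in a claw-free graph a blue vertex with neighbors split across two white components yields three pairwise non-adjacent vertices in its neighborhood, i.e.\ an induced claw. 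Turning this local obstruction into a global equality $Z(G) = Z_+(G)$ — ensuring that the rerouting never increases the forcing set size and always terminates — is where the real work lies.
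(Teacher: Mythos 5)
Your setup is the same as the paper's: take a minimum positive semidefinite forcing set $S$ with $G-S$ connected (Lemma~\ref{lem:connected}) and argue that the positive semidefinite process launched from $S$ is in fact a standard forcing process, giving $Z(G) \leq |S| = Z_+(G)$. However, you stop exactly at the point where the actual proof happens, and the local obstruction you propose in its place is not correct. The paper's key insight, which you explicitly decline to assert (``I cannot simply assert that $G[W]$ stays connected throughout''), is that the white graph \emph{does} stay connected at every time step, and claw-freeness is what guarantees this. The induction goes as follows: if the white set $W^{[t-1]}$ induces a connected subgraph and $u$ forces $v$, then $v$ is the unique white neighbor of $u$ in all of $G$ (connectivity collapses the positive semidefinite rule to the standard rule); if removing $v$ were to disconnect $G[W^{[t-1]}]$, then $v$ would have white neighbors $w$ and $z$ in distinct components of $G[W^{[t]}]$, and since $u$ has no white neighbor other than $v$, the set $\{u,v,w,z\}$ induces a claw \emph{centered at the forced vertex $v$} --- contradiction. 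So connectivity is preserved, and every subsequent positive semidefinite force is again a standard force. No rerouting of the chronology and no appeal to Lemma~\ref{lem:reverse} is needed in the main theorem (the path-bundle machinery lives entirely inside the proofs of Lemmas~\ref{lem:disconnected} and~\ref{lem:connected}).

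Your proposed claw, by contrast, is centered at the \emph{forcing} vertex $u$ with two white neighbors $v_1, v_2$ in distinct white components, and that configuration is simply not forbidden in claw-free graphs: it gives only an induced path $v_1 u v_2$, and one needs a third pairwise non-adjacent neighbor of $u$ to get $K_{1,3}$. A concrete counterexample to your claimed obstruction is $P_3 = a\,u\,b$ with $S = \{u\}$: this is a claw-free graph, $S$ is a valid minimum positive semidefinite forcing set, $u$ forces into two distinct white components, and yet $S$ is not a standard forcing set. This example shows why the theorem cannot be proved by ruling out such configurations pointwise; it is essential to start from a set whose complement is connected \emph{and} to prove the connectivity invariant is maintained --- which is precisely the step you flagged as ``where the real work lies'' and left open. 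As written, the proposal is an accurate plan of attack with the decisive lemma missing.
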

\begin{proof}
Let $G$ be a connected and claw-free graph. Since every standard forcing set is also a positive semidefinite forcing set, it follows that $Z_+(G) \leq Z(G)$. To prove the theorem, it suffices to show that $Z_+(G) \geq Z(G)$. By Lemma~\ref{lem:connected}, there exists a minimum positive semidefinite forcing set $S \subseteq V(G)$ such that the induced subgraph $G[W]$ is connected, where $W = V(G) \setminus S$. Fix such a set $S$, and color the vertices in $S$ blue and the vertices in $W$ white.

We aim to show that every valid positive semidefinite force during a chronological list of forces beginning at $S$ is also a valid standard force, which will, in turn, imply that $S$ is a standard forcing set of $G$. Along the way we will need to track that the white colored vertices always induce a connected subgraph; that is, the set of white colored vertices at time $t$ induces a connected subgraph of $G$, for each time step $0\leq t<\tau$.

\medskip
\noindent\textbf{Base Case ($t = 0$):} At time $t = 0$, the set of white vertices is $W^{[0]} = W$, and by the choice of $S$, the induced subgraph $G[W^{[0]}]$ is connected.  Also, since no forces occur prior to $t=1$, it is vacuously true that each force prior to that point is a valid standard force. Next consider the first positive semidefinite force $\psdforce{u}{v}$ that occurs at time $t = 1$. By the positive semidefinite forcing rule, $u$ is a blue vertex with $v$ as its unique white neighbor in the component of $G[W^{[0]}]$ containing $v$. Since $G[W^{[0]}]$ is connected, this implies that $v$ is the unique white neighbor of $u$ in $G$. Thus, $G[W^{[0]}]$ is connected and any positive semidefinite force at time $t = 1$ is also a valid standard force at time $t = 1$. 

\medskip
\noindent\textbf{Inductive Step:} For our inductive step, we assume that for some $t> 0$, we have that
\begin{enumerate}
    \item[(a)] The induced subgraph $G[W^{[t-1]}]$ is connected, where $W^{[t-1]}$ is the set of white vertices at time $t-1$.
    \item[(b)] Every positive semidefinite force up to and including time $t$ corresponds to a valid standard force in $G$.
\end{enumerate}

Let $v$ be the vertex that was forced to become blue at time $t$ by the vertex $u$. We first verify that the induced subgraph of the remaining white vertices, $G[W^{[t]}]$, remains connected after the force at time $t$ has been applied, where
\[
W^{[t]} = W^{[t-1]} \setminus \{v\}.
\]
Suppose, for contradiction, that $G[W^{[t]}]$ is disconnected. This implies that the removal of $v$ from $G[W^{[t-1]}]$ disconnects the white colored subgraph. Thus, $v$ is a cut-vertex of $G[W^{[t-1]}]$. Consequently, $v$ has at least two white neighbors in $G[W^{[t-1]}]$ that belong to different components of $G[W^{[t]}]$, say $w$ and $z$.  Since $\zforce{u}{v}$ is a valid standard force, $v$ is the unique white neighbor of $u$ in $G$ at time $t-1$.  Therefore, $u$ is not adjacent to any of the white neighbors of $v$, and in particular, the nonadjacent vertices $w$ and $z$. Thus, the set $\{u, v, w, z\}$ induces a claw centered at $v$, contradicting the assumption that $G$ is claw-free. Therefore, $G[W^{[t]}]$ must be connected. 

Next suppose that $\psdforce{u'}{v'}$ is a valid positive semidefinite force at time $t+1$. By the positive semidefinite forcing rule, $u'$ is a blue vertex at time $t$, and $v'$ is the unique white neighbor of $u'$ in the component of $G[W^{[t]}]$ containing $v'$. Since $G[W^{[t]}]$ is connected, it follows that $v'$ is the unique white neighbor of $u'$ in $G$. Therefore, $\zforce{u'}{v'}$ is a valid standard force in $G$ at time $t+1$.

\medskip
By induction, we have established that at each time step $t \geq 1$, the positive semidefinite force $\psdforce{u}{v}$ occurring during the chronological list of forces starting from $S$ is also a valid standard force. Therefore, the chronological list of positive semidefinite forces starting from $S$ effectively mirrors a standard forcing process. Since $S$ is a positive semidefinite forcing set, this process eventually colors all vertices of $G$ blue, which implies that $S$ is a standard forcing set of $G$. Thus, we have 
\[
Z(G) \leq |S| = Z_+(G),
\]
which when combined with the earlier inequality $Z_+(G) \leq Z(G)$, implies $Z_+(G) = Z(G)$, our desired result. 
\end{proof}

Since the standard and positive semidefinite forcing numbers are additive with respect to components, it is clear that any disconnected and claw-free graph $G$ satisfies $Z_+(G) = Z(G)$. Using this fact, together with induced subgraphs of claw-free graphs being claw-free, we now give a characterization of claw-free graphs in terms of standard and positive semidefinite forcing. 

\begin{corollary}\label{cor:main}
A graph is $(Z_+, Z)$-perfect if and only if it is claw-free. 
\end{corollary}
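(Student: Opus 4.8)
The plan is to prove both directions of the biconditional, leaning on Theorem~\ref{thm:main} for the forward implication and exhibiting a single explicit witness for the reverse.

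For the implication that every claw-free graph is $(Z_+, Z)$--perfect, I would first recall that the class of claw-free graphs is closed under taking induced subgraphs: if $H \subseteq G$ is induced and $G$ contains no induced $K_{1,3}$, then neither does $H$. So I would fix an arbitrary induced subgraph $H$ of a claw-free graph $G$ and note that $H$ is itself claw-free. Each connected component of $H$ is then connected and claw-free, so Theorem~\ref{thm:main} applies to it and yields equality of the two forcing numbers on that component. Since both $Z$ and $Z_+$ are additive over connected components (as recorded in the paragraph preceding the statement), summing these equalities over the components of $H$ gives $Z_+(H) = Z(H)$. As $H$ was arbitrary, $G$ is $(Z_+, Z)$--perfect.

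For the reverse implication I would argue the contrapositive: a graph that is not claw-free fails to be $(Z_+, Z)$--perfect. If $G$ is not claw-free, then by definition it contains an induced subgraph $H$ isomorphic to the claw $K_{1,3}$, and the whole argument reduces to computing the two invariants on this one graph. I would verify directly that $Z_+(K_{1,3}) = 1$: coloring the center blue, each leaf is the unique neighbor of the center within its own singleton component of the remaining white graph, so the center forces all three leaves; since $Z_+ \geq 1$ for any nonempty graph, equality holds. By contrast $Z(K_{1,3}) = 2$, since a single blue vertex never propagates (the center has three white neighbors and cannot standard force, while a single blue leaf forces only the center, which then stalls with two white neighbors), whereas two blue leaves force the center and then the last leaf. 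Hence $Z_+(H) = 1 \neq 2 = Z(H)$, so $H$ witnesses that $G$ is not $(Z_+, Z)$--perfect.

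This exhausts both directions and completes the characterization. I expect no serious obstacle here, as Theorem~\ref{thm:main} does the heavy lifting; the only points requiring care are the reduction to connected components in the forward direction (handled cleanly by additivity) and the small but explicit forcing computations on $K_{1,3}$ in the reverse direction, which pin down the gap $Z_+ = 1 < 2 = Z$ that drives the entire equivalence.
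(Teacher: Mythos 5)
Your proposal is correct and follows essentially the same route as the paper: the forward direction combines Theorem~\ref{thm:main} with closure of claw-freeness under induced subgraphs and additivity of both invariants over components, and the reverse direction is the same contrapositive argument using $1 = Z_+(K_{1,3}) < Z(K_{1,3}) = 2$. Your explicit forcing computations on $K_{1,3}$ are a slightly more detailed verification of a gap the paper simply asserts, but the structure of the argument is identical.
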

\begin{proof}
Let $G$ be a graph. If $G$ is claw-free, then any induced subgraph $H$ of $G$ is also claw-free. Thus, by Theorem~\ref{thm:main}, $Z_+(H) = Z(H)$ for every induced subgraph $H$ of $G$, and $G$ is $(Z_+, Z)$-perfect. 

Conversely, and proceeding by way of contraposition, suppose $G$ is a graph containing at least one induced subgraph $H \subseteq G$ with $H \cong K_{1, 3}$. Then, 
\[
1 = Z_+(H) < Z(H) = 2,
\]
implying that $G$ is not $(Z_+, Z)$-perfect. Thus, 
$G$ is $(Z_+, Z)$-perfect if and only if $G$ is claw-free.
\end{proof}

%%%%%%%%%%%%%%%%%%%%%%%%%%%%%%%%%%%%%%%%%
%%%%%%%%%%%%%%%%%%%%%%%%%%%%%%%%%%%%%%%%%

\section{\textbf{CONCLUSION}}\label{sec:conclusion}
In this paper, we proved that $Z_+(G) = Z(G)$ for all claw-free graphs (Theorem~\ref{thm:main}), thereby confirming Conjecture~\ref{conj:main} proposed by \emph{TxGraffiti}. As a consequence, we established that a graph $G$ is $(Z_+, Z)$--perfect if and only if $G$ is claw-free (Corollary~\ref{cor:main}). This result not only characterizes $(Z_+, Z)$--perfect graphs but also unifies and extends prior work on the relationship between $Z(G)$ and $Z_+(G)$.

Our characterization naturally leads to several questions for future research. Can similar equivalences be identified for other graph classes or for other variants of zero forcing? What are the algorithmic consequences of $(Z_+, Z)$--perfectness, particularly for claw-free graphs? Moreover, how might these results generalize to directed graphs or other settings? Investigating these questions may provide further insights into the connections between structural graph theory and zero forcing concepts.

\bibliographystyle{amsplain}
\bibliography{references}

\end{document}